\newcommand{\CopyName}{A.\ A.\ Shalukhina} 
\renewcommand{\refname}{\refnam}
\newcommand{\tit}{ON THE EXTENSION OF THE REVERSE H\"OLDER INEQUALITY\\ FOR POWER FUNCTIONS ON THE REAL AXIS} 
\date{}
\begin{document}

\English
\hskip50pt
\begin{center} \renewcommand{\baselinestretch}{1.3}\bf {\tit} \end{center}
\begin{center} \textsc {\CopyName} \end{center}

\vspace{20pt plus 0.5pt} {\abstract{\textsc{Abstract}. We consider the class of all non-negative on $\mathbb{R_+}$ functions such that each of them satisfies the Reverse H\"older Inequality uniformly over all intervals with some constant the minimum value of which can be regarded as the corresponding ``norm'' of a function. We compare this ``norm'' with the ``norm'' of an even extension of a function from $\mathbb{R_+}$ on $\mathbb{R}.$ In this paper the upper estimate for the ratio of such ``norms'' has been obtained. In the particular case of power functions on $\mathbb{R_+}$ the precise value of the increase of the ``norm'' of its even extension is given. This value is the lower estimate for the analogous one in the case of arbitrary functions. It has been shown that the obtained upper and lower estimates for the general case are asymptotically sharp.


}} \vsk
\subjclass{ 26D10, 42B25}

\renewcommand{\refname}{\refnam}
\renewcommand{\proofname}{\ifthenelse{\value{lang}=0}{Proof}{\ifthenelse{\value{lang}=2}{Доказательство}{Доведення}}}

\begin{center}\small{1. INTRODUCTION}\end{center}

Let a function $f$ be non-negative on a bounded interval $I\subset\mathbb{R}.$ For fixed $\alpha\neq0$ denote by $M_{I,\alpha}(f)$ the means of $f$
$$
M_{I,\alpha}(f)=\left({\frac{1}{|I|}}
{\int_I{f^\alpha{(x)}}\,dx}\right)^\frac{1}{\alpha},
$$
where $|\cdot|$ refers to the Lebesgue measure.

The means $M_{I,\alpha}(f)$ increase as $\alpha$ increases [1, p.144]:
according to the H\"older inequality, for $\alpha<\beta$ and $f\not\sim const$ on the interval $I$ the relation $M_{I,\alpha}(f)<M_{I,\beta}(f)$ holds. This inequality remains valid in the case $\alpha\beta=0$ as well, but $M_{I,0}$ is defined in another way. Throughout the paper we will assume $\alpha\beta\ne0.$

We consider functions satisfying the Reverse H\"older Inequality, i.e., the class of functions $f$ such that
\begin{equation*}
P_{\alpha,\beta}(f)\equiv\sup_{I\subset\mathbb{R_{+}}}
{\frac{M_{I,\beta}(f)}{M_{I,\alpha}(f)}}<+\infty,
\end{equation*}
where the supremum is taken over all intervals $I$ from the positive real axis,
and research on the extension of this condition on the whole real axis. In other words,
we compare $P_{\alpha,\beta}(f)$ with
\begin{equation}\label{02}
{R_{\alpha,\beta}(\overline{f})}\equiv\sup_{I\subset\mathbb{R}}
{\frac{M_{I,\beta}(\overline{f})}{M_{I,\alpha}(\overline{f})}},
\end{equation}
where $I$ stands for different intervals and $\overline{f}$ denotes the even extension of a function $f$ on the real axis.

For the class $ \mathcal A$ of all arbitrary non-negative on $(0;+\infty)$ functions the task is to estimate the constant
$$A_{\alpha,\beta}=\sup_{f\in\mathcal A}
\frac{R_{\alpha,\beta}(\overline{f})}{P_{\alpha,\beta}(f)}$$
that expresses the measure of distinction between $P_{\alpha,\beta}(f)$ and $R_{\alpha,\beta}(\overline{f}).$ The upper estimate $\overline{A}_{\alpha,\beta}$ of $A_{\alpha,\beta}$ has been obtained in the present work. However, we consider mainly a particular case related to the class of all power functions $f(x)=x^\gamma$ defined on $\mathbb{R_+}$ that is embedded in $ \mathcal A$. In order for the means to be finite and positive we suppose
$$\gamma\in\Gamma_{\alpha,\beta}=\{\gamma\in\mathbb{R}:\;\alpha\gamma>-1,\,\beta\gamma>-1\}=
\left\{\begin{array}{ll}
\left(-\frac1\beta; +\infty\right)\;\textnormal{if}\,0<\alpha<\beta,\\
\left(-\infty; -\frac1\alpha\right)\;\textnormal{if}\,\alpha<\beta<0,\\
\left(-\frac{1}{\beta};-\frac1\alpha\right)\;\textnormal{if}\,\alpha<0<\beta.\\
\end{array}
\right.$$
The value analogous to $A_{\alpha,\beta}$ in the particular case is
\begin{equation*}
C_{\alpha,\beta}=\sup_{\small \begin{array}{cc}f(x)=x^\gamma,\\ \small \gamma\in \Gamma_{\alpha,\beta}\end{array}}\frac{R_{\alpha,\beta}(\overline{f})}{P_{\alpha,\beta}(f)}.
\end{equation*}
We calculate the precise value of $C_{\alpha,\beta}$ that can be considered the lower estimate for $A_{\alpha,\beta}.$ Moreover, we obtain the asymptotical equality of the lower and upper estimates $C_{\alpha,\beta}$ and $\overline{A}_{\alpha,\beta}$ respectively.

\begin{center}\small{2. GENERAL ESTIMATES}\end{center}

Let us obtain first a simple upper estimate of $A_{\alpha,\beta}.$

\begin{theorem}\label{22}
The following relation holds
\begin{equation*}
A_{\alpha,\beta}\leq \overline{A}_{\alpha,\beta}\equiv\left\{\begin{array}{ll}
2^{\frac1\alpha}\quad\textnormal{if}\quad0<\alpha<\beta,\\
2^{-\frac1\beta}\quad\textnormal{if}\quad\alpha<\beta<0,\\
2^{\frac1\beta-\frac1\alpha}\quad\textnormal{if}\quad\alpha<0<\beta.\\
\end{array}
\right.
\end{equation*}
\end{theorem}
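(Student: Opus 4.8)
The plan is to fix an arbitrary $f\in\mathcal A$ with $P:=P_{\alpha,\beta}(f)<+\infty$ (otherwise the asserted bound is trivial, assuming as usual that the means involved are finite and positive) and to prove that $M_{I,\beta}(\overline f)/M_{I,\alpha}(\overline f)\le \overline A_{\alpha,\beta}\,P$ for every interval $I\subset\mathbb R$; taking the supremum over $I$ then gives $R_{\alpha,\beta}(\overline f)\le\overline A_{\alpha,\beta}\,P$ and, after a supremum over $f$, the theorem. I would split the intervals into three types. If $I\subset(0;+\infty)$ then $\overline f=f$ on $I$ and the ratio is $\le P$ by the definition of $P_{\alpha,\beta}(f)$; if $I\subset(-\infty;0)$, the evenness of $\overline f$ lets me reflect $I$ onto $(0;+\infty)$ without changing either mean, so again the ratio is $\le P$. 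Since one checks directly that $\overline A_{\alpha,\beta}\ge 1$ in all three regimes, these two cases are already subsumed, and the entire difficulty is concentrated in the straddling case $I=(-a;b)$ with $a,b>0$.

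For a straddling interval the evenness of $\overline f$ yields, writing $F_\gamma(t):=\int_0^t f^\gamma(x)\,dx$, the identities $\int_{-a}^{b}\overline f^{\,\gamma}=F_\gamma(a)+F_\gamma(b)$ and $|I|=a+b$; moreover, reflecting $I$ I may assume $a\le b$. I would then bound numerator and denominator separately using only two elementary monotonicity facts about $F_\gamma$, namely that it is nondecreasing (because $f^\gamma\ge 0$) and nonnegative, while tracking the sign of the exponent. Concretely, replacing $F_\gamma(a)$ either by $0$ or by $F_\gamma(b)$ — whichever direction is favorable for the sign of $1/\gamma$ — turns each mean over $I$ into a multiple of the corresponding mean over $J:=(0;b)$. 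I expect to obtain $M_{I,\beta}(\overline f)/M_{I,\alpha}(\overline f)\le 2^{c}\,F_\beta(b)^{1/\beta}F_\alpha(b)^{-1/\alpha}(a+b)^{1/\alpha-1/\beta}$, where the power of $2$ equals $1/\beta$, $-1/\alpha$, or $1/\beta-1/\alpha$ according to the three sign regimes.

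The key step is then to eliminate the $F$-factors by applying the reverse H\"older inequality to $J=(0;b)\subset\mathbb R_+$: since $F_\beta(b)^{1/\beta}F_\alpha(b)^{-1/\alpha}=\bigl(M_{J,\beta}(f)/M_{J,\alpha}(f)\bigr)\,b^{1/\beta-1/\alpha}\le P\,b^{1/\beta-1/\alpha}$, the estimate collapses to $2^{c}\,P\,\bigl((a+b)/b\bigr)^{1/\alpha-1/\beta}$. Finally I would use $1\le (a+b)/b\le 2$ together with the sign of $1/\alpha-1/\beta$. When $\alpha,\beta$ share a sign this exponent is positive, so the last factor is at most $2^{1/\alpha-1/\beta}$ and combines with the prefactor $2^{1/\beta}$ (resp.\ $2^{-1/\alpha}$) into exactly $2^{1/\alpha}$ (resp.\ $2^{-1/\beta}$); when $\alpha<0<\beta$ the exponent is negative, so the last factor is $\le 1$ and the prefactor $2^{1/\beta-1/\alpha}$ already delivers the claimed constant.

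I would flag the sign bookkeeping as the main source of potential slips rather than a genuine obstacle: the three cases differ only in which of $F_\gamma(a)\le F_\gamma(b)$ or $F_\gamma(a)\ge 0$ is the correct estimate for the numerator versus the denominator, and in whether $\bigl((a+b)/b\bigr)^{1/\alpha-1/\beta}$ is controlled by its value at $a=b$ or in the limit $a\to 0$. Everything else is a routine rearrangement of powers.
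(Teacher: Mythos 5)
Your proposal is correct and is essentially the paper's own argument in different notation: writing $I=(-a;b)$ with $a=\varepsilon b$, your replacement of $F_\gamma(a)$ by $F_\gamma(b)$ or by $0$ according to the sign of $1/\gamma$ is exactly the paper's comparison of $\int_{-\varepsilon b}^{b}$ with $\int_{-b}^{b}$ or $\int_{0}^{b}$, and your factor $\bigl((a+b)/b\bigr)^{1/\alpha-1/\beta}$ is the paper's $\max_{0\le\varepsilon\le1}(1+\varepsilon)^{-(1/\beta-1/\alpha)}$. The only difference is that you spell out the reduction to straddling intervals and the application of the reverse H\"older inequality on $(0;b)$ more explicitly, which the paper compresses into a single displayed chain.
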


\begin{proof} Let $f\in\mathcal A.$ For its even extension $\overline{f}$ it is enough to take the supremum in (\ref{02}) only over the intervals $(-a;b),$ where $a=\varepsilon b,$ $\varepsilon\in[0;1].$

Therefore, if $0<\alpha<\beta$,
$$
R_{\alpha,\beta}(\overline{f})=\sup_{b\in\mathbb{R}_+,\,0\leq\varepsilon\leq1}{\frac
{\left(\frac{1}{b(1+\varepsilon)}\int_{-\varepsilon
b}^b{\overline{f}^{\beta}\,dx}\right)^\frac1\beta}
{\left(\frac{1}{b(1+\varepsilon)}\int_{-\varepsilon
b}^b{\overline{f}^{\alpha}\,dx}\right)^\frac1\alpha}}\leq
\sup_{b\in\mathbb{R}_+,\,0\leq\varepsilon\leq1}{\frac
{\left(\frac{1}{b(1+\varepsilon)}\int_{-b}^b{\overline{f}^{\beta}\,dx}\right)^\frac1\beta}
{\left(\frac{1}{b(1+\varepsilon)}\int_{0}^b{\overline{f}^{\alpha}\,dx}\right)^\frac1\alpha}}=
$$
$$
=\sup_{b\in\mathbb{R}_+,\,0\leq\varepsilon\leq1}\left(\left(\frac1{1+\varepsilon}\right)^{\frac1\beta-\frac1\alpha}\cdot
\frac{2^{\frac1\beta}{\left(\frac{1}{b}\int_0^b{f^{\beta}\,dx}\right)^\frac1\beta}}
{\left(\frac{1}{b}\int_0^b{f^{\alpha}\,dx}\right)^\frac1\alpha}\right)=
$$
$$
={2^{\frac1\beta}}\max_{0\leq\varepsilon\leq1}\left(\frac1{1+\varepsilon}\right)^{\frac1\beta-\frac1\alpha}\cdot
P_{\alpha,\beta}(f)={2^{\frac1\alpha}}P_{\alpha,\beta}(f),
$$
that implies $A_{\alpha,\beta}\leq{2^{\frac1\alpha}}.$

If $\alpha<\beta<0$, it can be derived in a similar way that
$$
R_{\alpha,\beta}(\overline{f})={2^{-\frac1\beta}}P_{\alpha,\beta}(f),
$$
and this is followed by the estimate
$A_{\alpha,\beta}\leq{2^{-\frac1\beta}}.$

Eventually, in the case $\alpha<0<\beta$ we obtain

$$
R_{\alpha,\beta}(f)=\sup_{b\in\mathbb{R}_+,\,0\leq\varepsilon\leq1}{\frac
{\left(\frac{1}{b(1+\varepsilon)}\int_{-\varepsilon
b}^b{\overline{f}^{\beta}\,dx}\right)^\frac1\beta}
{\left(\frac{1}{b(1+\varepsilon)}\int_{-\varepsilon
b}^b{\overline{f}^{\alpha}\,dx}\right)^\frac1\alpha}}\leq
\sup_{b\in\mathbb{R}_+,\,0\leq\varepsilon\leq1}{\frac
{\left(\frac{1}{b(1+\varepsilon)}\int_{-b}^b{\overline{f}^{\beta}\,dx}\right)^\frac1\beta}
{\left(\frac{1}{b(1+\varepsilon)}\int_{-b}^b{\overline{f}^{\alpha}\,dx}\right)^\frac1\alpha}}=
$$
$$
={2^{\frac{1}{\beta}-\frac1\alpha}}\max_{0\leq\varepsilon\leq1}\left(\frac1{1+\varepsilon}\right)^{\frac1\beta-\frac1\alpha}\cdot
P_{\alpha,\beta}(f)={2^{\frac{1}{\beta}-\frac1\alpha}}P_{\alpha,\beta}(f)
$$
that is followed by the estimate $A_{\alpha,\beta}\leq{2^{\frac1\beta-\frac1\alpha}}.$

Combining these three cases together completes the proof.
\end{proof}

Though the derived estimate is rather simple, it will be shown that it is asymptotically sharp.

Now let us focus on obtaining the lower estimate $C_{\alpha,\beta}$ for $A_{\alpha,\beta}$ by considering only power functions among all functions contained in the class $ \mathcal A.$ We first prove the next auxiliary statement that is useful to simplify the process of calculating $P_{\alpha,\beta}(f)$ in the case of a monotone function $f\in\mathcal A.$

\begin{theorem}
\label{04}
Let $\alpha<\beta$ and let $f$ be a non-negative monotone function on $\mathbb{R_+}.$ Assume $f^\alpha$ and $f^\beta$ are summable on every interval $I\subset\mathbb{R_+}.$ Then
$$
P_{\alpha,\beta}(f)=
\sup_{(0;\varepsilon)\subset\mathbb{R_{+}}}
{\frac{M_{(0;\varepsilon),\beta}(f)}{M_{(0;\varepsilon),\alpha}(f)}}.
$$
\end{theorem}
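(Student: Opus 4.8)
The plan is to reduce the supremum over all intervals $I\subset\mathbb{R_+}$ to the supremum over the \emph{initial} intervals $(0;b)$, which are precisely the intervals $(0;\varepsilon)$ on the right-hand side. Since the supremum defining $P_{\alpha,\beta}(f)$ ranges over a larger family, the inequality ``$\ge$'' is immediate, so the entire content is the reverse inequality; it follows at once once I show that for monotone $f$ every interval is dominated by an initial one, namely
\begin{equation*}
\frac{M_{(a;b),\beta}(f)}{M_{(a;b),\alpha}(f)}\le\frac{M_{(0;b),\beta}(f)}{M_{(0;b),\alpha}(f)},\qquad 0<a<b.
\end{equation*}
Both monotone cases will be handled in parallel: whether $f$ increases or decreases, enlarging $(a;b)$ on the left toward $0$ adjoins exactly the values of $f$ that are most extreme relative to those already present (the smallest when $f$ increases, the largest when $f$ decreases), so in either case one expects the ratio to grow.

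To make this precise I would fix the right endpoint $b$ and regard $g(a)\equiv M_{(a;b),\beta}(f)/M_{(a;b),\alpha}(f)$ as a function of the left endpoint $a\in[0;b)$, showing it attains its maximum at $a=0$. Writing $F_p(a)=\int_a^b f^p$ and $L=b-a$, each map $a\mapsto\ln F_p(a)$ is absolutely continuous on compact subintervals (here the summability hypothesis and positivity of the means enter), so $\ln g$ is absolutely continuous and it suffices to control the sign of its derivative. Using $\tfrac{d}{da}\int_a^b f^p=-f(a)^p$ (valid at almost every $a$, with no differentiability of $f$ needed), a direct computation gives, with $u=f(a)$ and $M_p=M_{(a;b),p}(f)$,
\begin{equation*}
L\,\frac{d}{da}\ln g(a)=\frac{(u/M_\alpha)^\alpha-1}{\alpha}-\frac{(u/M_\beta)^\beta-1}{\beta}.
\end{equation*}
Since $u=f(a)$ is the extreme value of $f$ on $[a;b]$, the scale-invariant normalization $h=f/u$ (so $h\ge1$ when $f$ increases and $h\le1$ when $f$ decreases) reduces matters to the single scalar inequality
\begin{equation*}
\frac{\langle h^\alpha\rangle^{-1}-1}{\alpha}\le\frac{\langle h^\beta\rangle^{-1}-1}{\beta},
\end{equation*}
where $\langle h^p\rangle$ is the average of $h^p$ over $(a;b)$; establishing it yields $\tfrac{d}{da}\ln g\le0$, hence the maximum at $a=0$ and the desired domination.

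The main obstacle is this last scalar inequality, which is genuinely delicate rather than a formal consequence of the monotonicity of power means: the competing effects — that $\langle h^p\rangle^{-1}$ decreases in $p$ while the prefactor $1/p$ and the exponentiation pull the other way — nearly cancel, and the inequality degenerates to an equality in the limit of an $f$ that is constant and then jumps (as $a\to0$), so the argument must exploit quantitatively the strict convexity hidden in H\"older's inequality. I would prove it by applying a Chebyshev-type correlation inequality to the similarly (respectively oppositely) ordered factors $h^\alpha$ and $h^{\beta-\alpha}$, splitting into the three regimes $0<\alpha<\beta$, $\alpha<\beta<0$ and $\alpha<0<\beta$ to fix the directions of all the inequalities, and checking the boundary behavior as $a\to0^+$ (finiteness from summability) and as $a\to b^-$ (where $g\to1$, its minimum). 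Once the scalar inequality is secured, the reduction above gives $P_{\alpha,\beta}(f)=\sup_{b}M_{(0;b),\beta}(f)/M_{(0;b),\alpha}(f)$, which is the asserted formula since the intervals $(0;b)$ are exactly the $(0;\varepsilon)$, completing the proof.
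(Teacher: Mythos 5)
Your reduction to initial intervals is the right target, but the specific domination on which everything rests --- that for a fixed right endpoint $b$ the ratio $g(a)=M_{(a;b),\beta}(f)/M_{(a;b),\alpha}(f)$ attains its maximum at $a=0$ --- is false, and with it your scalar inequality $\frac{\langle h^\alpha\rangle^{-1}-1}{\alpha}\le\frac{\langle h^\beta\rangle^{-1}-1}{\beta}$. Take $\alpha=1$, $\beta=2$ and the non-negative decreasing function $f=2$ on $(0;1]$, $f=1$ on $(1;+\infty)$. Over $I=(1/2;2)$ one computes $M_{I,1}(f)=4/3$ and $M_{I,2}(f)=\sqrt2$, so the squared ratio is $9/8$, while over $(0;2)$ one gets $M_1=3/2$, $M_2=\sqrt{5/2}$ and squared ratio $10/9<9/8$. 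Equivalently, at $a=0$ with $u=f(0)=2$ and $h=f/2$ one has $\langle h\rangle^{-1}-1=1/3$ but $(\langle h^2\rangle^{-1}-1)/2=3/10$, so $L\,\frac{d}{da}\ln g(0)=1/3-3/10=1/30>0$: the ratio strictly increases as the left endpoint moves into the interval (its maximum here is at $a=1/2$, not $a=0$). So no Chebyshev-type correlation argument can secure your inequality; it simply has the wrong sign in the decreasing case. The theorem itself survives because $(1/2;2)$ is dominated by the \emph{longer} initial interval $(0;3)$, not by $(0;2)$.

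The missing idea is precisely the choice of the dominating initial interval. The paper first reduces to the normalized cases $1=\alpha<\beta$ or $\alpha<0<\beta=1$ via $g=f^\alpha$ or $g=f^\beta$, then, given $I$, uses monotonicity and continuity of the mean to select $(0;\varepsilon)\supseteq I$ with the \emph{same first mean}, $\frac1{|I|}\int_I f=\frac1\varepsilon\int_0^\varepsilon f$ (in the example above this forces $\varepsilon=3$), and invokes the known fact [2, p.160] that under this equal-means normalization $\frac1{|I|}\int_I\varphi(f)\le\frac1\varepsilon\int_0^\varepsilon\varphi(f)$ for every convex $\varphi$; applying this with $\varphi(t)=t^\beta$ (resp.\ $t^\alpha$, $\alpha<0$) increases the numerator (resp.\ decreases the denominator) of the ratio while the other mean is held fixed. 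If you want to keep a sliding-endpoint argument, you must move the right endpoint as well so as to preserve the relevant mean along the deformation --- which is in essence the paper's construction.
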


\begin{proof}
It is enough to prove that for any interval $I\subset\mathbb{R_{+}}$ there exists $\varepsilon>0$ such that
\begin{equation}\label{35}
{\frac{M_{I,\beta}(f)}{M_{I,\alpha}(f)}}\leq
{\frac{M_{(0;\varepsilon),\beta}(f)}{M_{(0;\varepsilon),\alpha}(f)}}.
\end{equation}

Setting $g=f^\alpha$ when $0<\alpha<\beta$ and $g=f^\beta$ as $\alpha<\beta<0$ respectively in (\ref{35}), we can obtain the analogous inequality for the function $g$ and $1=\alpha<\beta,$ so that to cover the case $\alpha\beta>0$ it is sufficient to prove (\ref{35}) only for the case $1=\alpha<\beta.$ When $\alpha<0<\beta,$ defining $g=f^\beta,$ we convert (\ref{35}) to the case $\alpha<0<\beta=1,$ hence we can prove (\ref{35}) only on the assumption that $\beta=1$.

Fix an arbitrary interval $I\subset\mathbb{R_+}.$ Because of the monotonicity of $f$ there exist the interval $(0;\varepsilon)\supseteq I$ such that
\begin{equation}\label{32}{\frac{1}{|I|}}{\int_I{f(x)}\,dx}=
{\frac{1}{\varepsilon}}{\int_0^\varepsilon{f(x)}\,dx}.\end{equation}
It is known [2, p.160] that in the case of equality (\ref{32}) for any positive convex downwards function $\varphi$ the inequality
\begin{equation}\label{36}
{\frac{1}{|I|}}\int_I{\varphi(f(x))\,dx}\leq{\frac{1}{\varepsilon}}{\int_0^\varepsilon{\varphi(f(x))}\,dx}
\end{equation}
holds.

For $1=\alpha<\beta$ the required inequality (\ref{35}) follows by combining (\ref{32}) and (\ref{36}), where $\varphi(t)=t^\beta\,(\beta>1).$ If $\alpha<0<\beta=1,$ we set $\varphi(t)=t^\alpha\,(\alpha<0)$ in (\ref{36}) and together with (\ref{32}) it implies (\ref{35}) for such values $\alpha$ and $\beta.$
\end{proof}

Let $f(x)=x^\gamma,\,\gamma\in\Gamma_{\alpha,\beta}.$ In order to compare $P_{\alpha,\beta,\gamma}\equiv P_{\alpha,\beta}(x^\gamma)$ and $R_{\alpha,\beta,\gamma}\equiv R_{\alpha,\beta}(|x|^\gamma),$ first obtain their analytic expressions.

By Theorem~\ref{04}, in order to compute $P_{\alpha,\beta,\gamma}$ it is enough to take the supremum only over the intervals of the form $(0;\varepsilon),\,\varepsilon>0,$ not over all arbitrary intervals $I\subset\mathbb{R_{+}}.$
Moreover, observe that for a power function $f$ the value of the expression on the right-hand side of (\ref{35}) is the same at various $\varepsilon>0$ values and
\begin{equation*}
{P_{\alpha,\beta,\gamma}}=\frac{(\gamma\alpha+1)^\frac1\alpha}{(\gamma\beta+1)^\frac1\beta}.
\end{equation*}

For calculating $R_{\alpha,\beta,\gamma}$ the following lemma is useful.

\begin{lemma}
\label{10}
Let $\gamma\in\Gamma_{\alpha,\beta}.$ Then for every function $\overline{f}(x)=|x|^\gamma$
\begin{equation}\label{11}
\sup_{I\subset\mathbb{R}}{\frac{M_{I,\beta}(\overline{f})}{M_{I,\alpha}(\overline{f})}}=
\sup_{0\leq\varepsilon\leq1}{\frac{M_{(-\varepsilon;1),\beta}(\overline{f})}{M_{(-\varepsilon;1),\alpha}(\overline{f})}}.
\end{equation}
\end{lemma}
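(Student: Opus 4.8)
The plan is to prove the two inequalities separately. The direction ``$\geq$'' is immediate: the intervals $(-\varepsilon;1)$ with $0\leq\varepsilon\leq1$ form a subfamily of all intervals $I\subset\mathbb{R}$, so the supremum on the right of (\ref{11}) cannot exceed the one on the left. All the content lies in the reverse inequality, and for it it suffices to show that for \emph{every} interval $I\subset\mathbb{R}$ the ratio $M_{I,\beta}(\overline{f})/M_{I,\alpha}(\overline{f})$ is bounded above by the right-hand side of (\ref{11}).

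Two symmetries of $\overline{f}(x)=|x|^\gamma$ drive the argument, and I would establish them first. Since $\overline{f}$ is even, the substitution $x\mapsto-x$ gives $M_{(c;d),\alpha}(\overline{f})=M_{(-d;-c),\alpha}(\overline{f})$ for every exponent, so the ratio is invariant under reflecting $I$ about the origin. Secondly, for $\lambda>0$ the change of variable $x=\lambda u$ yields $M_{(c;d),\alpha}(\overline{f})=\lambda^\gamma M_{(c/\lambda;\,d/\lambda),\alpha}(\overline{f})$; the homogeneity factor $\lambda^\gamma$ appears identically in numerator and denominator, so the ratio is invariant under the dilation $I\mapsto\lambda^{-1}I$. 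These two reductions let me normalize an arbitrary interval, and I would then split into two cases according to whether $I$ meets the origin.

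If $I$ avoids the origin, i.e.\ $I\subset\mathbb{R}_+$ or $I\subset\mathbb{R}_-$, then after a reflection I may assume $I\subset\mathbb{R}_+$, where $\overline{f}(x)=x^\gamma$. By the very definition of $P_{\alpha,\beta,\gamma}$ as a supremum over intervals in $\mathbb{R}_+$, the ratio on $I$ is at most $P_{\alpha,\beta,\gamma}$; and by the observation recorded just before the lemma (a consequence of Theorem~\ref{04}) this value is the same as the ratio on $(0;1)$, which is precisely the member $\varepsilon=0$ of the restricted family. Hence the ratio on such an $I$ does not exceed the right-hand side of (\ref{11}). If instead $I=(c;d)$ contains the origin in its interior ($c<0<d$), then after possibly reflecting I may assume $d\geq-c$, and dilating by $d$ turns $I$ into $(-\varepsilon;1)$ with $\varepsilon=-c/d\in(0;1]$; this interval already lies in the restricted family, so its ratio is trivially at most the supremum on the right. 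Combining the two cases shows that every $I$ has ratio bounded by the right-hand side of (\ref{11}), which gives ``$\leq$'' and completes the proof.

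I expect the main obstacle to be the case of intervals lying off the origin: there one must recognize that the $\mathbb{R}_+$-supremum $P_{\alpha,\beta,\gamma}$ is in fact already attained by the degenerate member $(0;1)$ of the restricted family, so that these intervals are controlled without ever straddling the origin. The two symmetries reduce everything else to bookkeeping, but pinning this identification on Theorem~\ref{04} (through the constancy of the power-function ratio over the intervals $(0;\varepsilon)$) is the step that makes the argument close.
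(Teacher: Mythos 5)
Your proof is correct and follows essentially the same route as the paper's: reduce by evenness and dilation invariance to the normalized intervals $(-\varepsilon;1)$. You are in fact more careful than the paper's two-line argument, which tacitly subsumes the intervals lying entirely on one side of the origin; your explicit treatment of that case via $P_{\alpha,\beta,\gamma}$, Theorem~\ref{04}, and the constancy of the power-function ratio on the intervals $(0;\varepsilon)$ fills in exactly the step the paper leaves implicit.
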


\begin{proof}
Since $\overline{f}$ is an even function, in the expression on the left in (\ref{11}) it is enough to take the supremum only over the intervals $I=(-a;b)$ such that $|a|\leq|b|.$ Then, by the change of variable $t=\frac{x}{b}$ in both integrals and setting $\varepsilon=\frac{a}{b}$ we get exactly the right-hand side of (\ref{11}).
\end{proof}

A straightforward computation together with (\ref{11}) gives
$$
R_{\alpha,\beta,\gamma}=\sup_{0\leq\varepsilon\leq1}
\left(\frac{{\left(\varepsilon^{\gamma\beta+1}+1\right)^\frac1\beta}
{\left(1+\varepsilon\right)^\frac1\alpha}}{{\left(\varepsilon^{\gamma\alpha+1}+1\right)^\frac1\alpha}
{\left(1+\varepsilon\right)^\frac1\beta}}\cdot P_{\alpha,\beta,\gamma}\right),
$$
or, equivalently,
\begin{equation}\label{12}
R_{\alpha,\beta,\gamma}=\left(\max_{0\leq\varepsilon\leq1}C_{\alpha,\beta,\gamma}(\varepsilon)\right)\cdot
P_{\alpha,\beta,\gamma},
\end{equation}
where
\begin{equation}\label{07}
C_{\alpha,\beta,\gamma}(\varepsilon)=\frac{{\left(\varepsilon^{\gamma\beta+1}+1\right)^\frac1\beta}
{\left(1+\varepsilon\right)^\frac1\alpha}}{{\left(\varepsilon^{\gamma\alpha+1}+1\right)^\frac1\alpha}
{\left(1+\varepsilon\right)^\frac1\beta}}.
\end{equation}

Denote by $C_{\alpha,\beta,\gamma}$ the maximum of the function $C_{\alpha,\beta,\gamma}(\varepsilon)$ on $[0;1].$
This function is continuous on $[0;1]$ for all $\alpha,\beta,$ and $\gamma$, hence, by the extreme value theorem, the maximum on $[0;1]$ exists (and replacing the supremum by the maximum in the expression (\ref{12}) for $R_{\alpha,\beta,\gamma}$ is correct) and is attained at some $\varepsilon=\varepsilon^0_{\alpha,\beta,\gamma}\in[0;1].$ According to the necessary condition for a local extremum,  $\varepsilon^0_{\alpha,\beta,\gamma}$ is the solution of the equation
$$
C\,'_{\alpha,\beta,\gamma}(\varepsilon)=0
$$
as well as the equivalent one
\begin{equation}\label{13}
(\alpha-\beta)\left(\varepsilon^{\gamma\alpha+\gamma\beta+1}-1\right)+\beta(\gamma\alpha+1)
\left(\varepsilon^{\gamma\beta+1}-\varepsilon^{\gamma\alpha}\right)+\alpha(\beta\gamma+1)\left(\varepsilon^{\gamma\beta}-\varepsilon^{\gamma\alpha+1}\right)=0.
\end{equation}

The equation (\ref{13}) clearly has the solution $\varepsilon=1,$ but this point is not the maximum point of $C_{\alpha,\beta,\gamma}(\varepsilon)$ on $[0;1].$ Indeed, $C_{\alpha,\beta,\gamma}(1)=1$ and it will be noted in Remark~\ref{37} on Lemma~\ref{18} that for any $\varepsilon\in(0;1)$ the relation $C_{\alpha,\beta,\gamma}(\varepsilon)>1$ holds. Therefore, $C_{\alpha,\beta,\gamma}>1.$

The constant $C_{\alpha,\beta,\gamma}$ reflects the relation between $P_{\alpha,\beta,\gamma}$ and $R_{\alpha,\beta,\gamma},$ as according to (\ref{12})
$$
\frac{R_{\alpha,\beta,\gamma}}{P_{\alpha,\beta,\gamma}}=
C_{\alpha,\beta,\gamma}.
$$
The explicit expression for the $C_{\alpha,\beta,\gamma}$ is difficult to find as this task is associated with solving the equation (\ref{13}), so that we will focus on estimating $C_{\alpha,\beta,\gamma}.$ Consider different cases of the values of $\alpha,\beta,$ and $\gamma.$

\textbf{A.} Let $0<\alpha<\beta,$ $\gamma\in\Gamma_{\alpha,\beta}$.

If $\gamma\geq0,$ directly majorizing the function $C_{\alpha,\beta,\gamma}(\varepsilon),$ we can get an upper estimate for $C_{\alpha,\beta,\gamma}$ that is more precise than $\overline{A}_{\alpha,\beta}$. Indeed, in this case for all $\varepsilon\in[0;1]$
$$
C_{\alpha,\beta,\gamma}(\varepsilon)={\left(1+\varepsilon\right)^{\frac1\alpha-\frac1\beta}}\frac{\left(\varepsilon^{\gamma\beta+1}+1\right)^\frac1\beta}
{\left(\varepsilon^{\gamma\alpha+1}+1\right)^\frac1\alpha}\leq{\left(1+\varepsilon\right)^{\frac1\alpha-\frac1\beta}}\frac{\left(\varepsilon^{\gamma\alpha+1}+1\right)^\frac1\beta}
{\left(\varepsilon^{\gamma\alpha+1}+1\right)^\frac1\alpha}\leq{\left(1+\varepsilon\right)^{\frac1\alpha-\frac1\beta}}
$$
and so
\begin{equation}\label{06}
C_{\alpha,\beta,\gamma}(\varepsilon)\leq{\left(1+\varepsilon\right)^{\frac1\alpha-\frac1\beta}}\quad
\textnormal{and}\quad C_{\alpha,\beta,\gamma}\leq2^{\frac1\alpha-\frac1\beta}\quad(\gamma\geq0)
\end{equation}
follow.

Analogously, in the case $-\frac1\beta<\gamma<0$ we have
$$
C_{\alpha,\beta,\gamma}(\varepsilon)={\left(1+\varepsilon\right)^{\frac1\alpha-\frac1\beta}}\frac{\left(\varepsilon^{\gamma\beta+1}+1\right)^\frac1\beta}
{\left(\varepsilon^{\gamma\alpha+1}+1\right)^\frac1\alpha}\leq{\left(1+\varepsilon\right)^{\frac1\alpha-\frac1\beta}}\frac{2^\frac1\beta}
{\left(\varepsilon+1\right)^\frac1\alpha}\leq{\left(\frac{2}{\varepsilon+1}\right)^{\frac1\beta}}
$$
that implies
\begin{equation}\label{17}
C_{\alpha,\beta,\gamma}(\varepsilon)\leq{\left(\frac{2}{\varepsilon+1}\right)^{\frac1\beta}}
\quad\textnormal{and}\quad C_{\alpha,\beta,\gamma}\leq2^{\frac1\beta}\quad(\gamma<0).
\end{equation}

Further, let us formulate the following auxiliary statement. It shows that as $\alpha$ and $\beta$ are fixed, the graphs of the functions $C_{\alpha,\beta,\gamma}(\varepsilon)$ that correspond to different values of $\gamma$ do not have any intersection points in the interval $(0;1).$

\begin{lemma}
\label{18}
Let $0<\alpha<\beta$ and
$\gamma_1,\,\gamma_2\in\Gamma_{\alpha,\beta}$ are such that $\gamma_1<\gamma_2.$ Then if $\gamma_1\geq0$ ($\gamma_2\leq0,$ respectively), for all $\varepsilon\in[0;1]$ the relation $C_{\alpha,\beta,\gamma_1}(\varepsilon)\leq C_{\alpha,\beta,\gamma_2}(\varepsilon)$ ($C_{\alpha,\beta,\gamma_1}(\varepsilon)\geq C_{\alpha,\beta,\gamma_2}(\varepsilon)$, respectively) holds. Moreover, the equality takes place only at the ends of the interval $[0;1].$
\end{lemma}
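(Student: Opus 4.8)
The plan is to pass to logarithms and reduce the claim to a one-dimensional monotonicity in the parameter $\gamma$. Taking the logarithm of (\ref{07}), the factor $(1+\varepsilon)^{1/\alpha-1/\beta}$ contributes a term $\left(\frac1\alpha-\frac1\beta\right)\ln(1+\varepsilon)$ that does not depend on $\gamma$. Hence, for a fixed $\varepsilon$, the dependence of $\ln C_{\alpha,\beta,\gamma}(\varepsilon)$ on $\gamma$ is governed entirely by
$$
\Phi(\gamma)=\frac1\beta\ln\left(\varepsilon^{\gamma\beta+1}+1\right)-\frac1\alpha\ln\left(\varepsilon^{\gamma\alpha+1}+1\right),
$$
and it is enough to determine how $\Phi$ varies with $\gamma$.

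First I would fix $\varepsilon\in(0;1)$ and differentiate in $\gamma$. Using $\frac{\partial}{\partial\gamma}\varepsilon^{\gamma\beta+1}=\beta(\ln\varepsilon)\,\varepsilon^{\gamma\beta+1}$, the prefactors $1/\beta$ and $1/\alpha$ cancel the $\beta$ and $\alpha$ produced by the chain rule, giving
$$
\Phi'(\gamma)=(\ln\varepsilon)\left(\frac{\varepsilon^{\gamma\beta+1}}{\varepsilon^{\gamma\beta+1}+1}-\frac{\varepsilon^{\gamma\alpha+1}}{\varepsilon^{\gamma\alpha+1}+1}\right).
$$
The domain condition $\gamma\in\Gamma_{\alpha,\beta}$ ensures $\gamma\alpha+1>0$ and $\gamma\beta+1>0$, so every power is well defined and $\Phi$ is differentiable. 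The sign analysis is then elementary: $\ln\varepsilon<0$; the map $u\mapsto u/(u+1)$ is strictly increasing, so the bracket has the sign of $\varepsilon^{\gamma\beta+1}-\varepsilon^{\gamma\alpha+1}$; and since $0<\varepsilon<1$ the power $\varepsilon^{p}$ is strictly decreasing in $p$. As $\beta>\alpha$, one has $\gamma\beta+1<\gamma\alpha+1$ precisely when $\gamma<0$, hence $\varepsilon^{\gamma\beta+1}>\varepsilon^{\gamma\alpha+1}$, the bracket is positive, and $\Phi'(\gamma)<0$; symmetrically $\Phi'(\gamma)>0$ when $\gamma>0$, while $\Phi'(0)=0$.

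With the sign of $\Phi'$ in hand, both assertions follow by monotonicity, for each fixed $\varepsilon\in(0;1)$. If $\gamma_1\geq0$ and $\gamma_1<\gamma_2$, the open interior of $[\gamma_1;\gamma_2]$ lies in $(0;+\infty)$, where $\Phi'>0$, so $\Phi$ is strictly increasing and $C_{\alpha,\beta,\gamma_1}(\varepsilon)<C_{\alpha,\beta,\gamma_2}(\varepsilon)$; the case $\gamma_2\leq0$ is symmetric, with $\Phi'<0$ on the interior and the reversed inequality. Finally, the boundary behaviour is read off directly from (\ref{07}): $C_{\alpha,\beta,\gamma}(0)=1$ and $C_{\alpha,\beta,\gamma}(1)=1$ for every admissible $\gamma$, so the two curves coincide at $\varepsilon=0$ and $\varepsilon=1$ and, by the strict inequality just established, at no interior point.

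I do not anticipate a serious obstacle; the computation is short once the logarithm isolates the $\gamma$-dependence and the prefactors cancel. The only points that need a little care are verifying that the exponents remain positive on $\Gamma_{\alpha,\beta}$ so that $\Phi$ is differentiable, and observing that in either case the open interior of $[\gamma_1;\gamma_2]$ avoids the point $\gamma=0$, which yields the strict inequality on $(0;1)$ even when an endpoint equals $0$.
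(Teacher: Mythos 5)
Your proof is correct and follows essentially the same route as the paper: both fix $\varepsilon\in(0;1)$, regard the $\gamma$-dependent part of $C_{\alpha,\beta,\gamma}(\varepsilon)$ as a function of the parameter $\gamma$, and show it is strictly increasing on $[0;+\infty)$ and strictly decreasing on $(-\infty;0]$ by a sign analysis of the derivative (the paper differentiates the ratio $\psi$ directly, you differentiate its logarithm, which is the same computation). The endpoint check $C_{\alpha,\beta,\gamma}(0)=C_{\alpha,\beta,\gamma}(1)=1$ matches the paper's concluding remark.
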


\begin{proof}
Consider the case $0\leq\gamma_1<\gamma_2$ first. The inequality
$$C_{\alpha,\beta,\gamma_1}(\varepsilon)<C_{\alpha,\beta,\gamma_2}(\varepsilon),\,\varepsilon\in(0;1),$$
according to (\ref{07}), is equivalent to the following one:
\begin{equation}\label{19}
\frac{\left(\varepsilon^{\gamma_1\beta+1}+1\right)^\frac1\beta}
{\left(\varepsilon^{\gamma_1\alpha+1}+1\right)^\frac1\alpha}<
\frac{\left(\varepsilon^{\gamma_2\beta+1}+1\right)^\frac1\beta}
{\left(\varepsilon^{\gamma_2\alpha+1}+1\right)^\frac1\alpha}.
\end{equation}

The inequality (\ref{19}) holds since the function
$$\psi(x)=\frac{\left(\varepsilon^{x\beta+1}+1\right)^\frac1\beta}
{\left(\varepsilon^{x\alpha+1}+1\right)^\frac1\alpha},\;x\in(0;1)$$
increases strictly on $[0;+\infty).$ Indeed,
$$\psi'(x)=\frac{\left(\varepsilon^{x\beta+1}+1\right)^{\frac1\beta-1}}
{\left(\varepsilon^{x\alpha+1}+1\right)^{\frac1\alpha+1}}\cdot\ln{\varepsilon}
\cdot\left(\varepsilon^{x\beta+1}-\varepsilon^{x\alpha+1}\right)>0,\quad
x>0.$$

Furthermore, as $\psi'(x)<0$ when $x<0,$ the function $\psi=\psi(x)$ is strictly decreasing on $(-\infty;0],$ and in the case of $\gamma_1<\gamma_2\leq0$ the inequality reverse to (\ref{19}) holds. The equality of the functions $y=C_{\alpha,\beta,\gamma_1}(\varepsilon)$ and $y=C_{\alpha,\beta,\gamma_2}(\varepsilon)$ at the ends of $[0;1]$ can be checked by direct calculation.
\end{proof}

\begin{remark}
\label{37}\; Lemma~\ref{18} in particular implies $C_{\alpha,\beta,\gamma}(\varepsilon)>1$ for all $\varepsilon\in(0;1)$ and arbitrary values of $\alpha,\beta,$ and $\gamma.$ \textnormal{Indeed, in the case $\gamma>0$ it is enough to set $\gamma_1=0,\,\gamma_2=\gamma$ in (\ref{19}) and divide the inequality by its left-hand side. If $\gamma<0,$ set $\gamma_1=\gamma,\,\gamma_2=0$ and use the opposite to (\ref{19}) inequality analogously.}
\end{remark}

Let us obtain the general estimate of
$$C_{\alpha,\beta}=\sup_{\gamma>-\frac1\beta}{C_{\alpha,\beta,\gamma}}=\max\{C^{+}_{\alpha,\beta},C^{-}_{\alpha,\beta}\},$$
where
$C^{+}_{\alpha,\beta}=\sup_{\gamma\geq0}{C_{\alpha,\beta,\gamma}},$
$C^{-}_{\alpha,\beta}=\sup_{-\frac{1}{\beta}<\gamma<0}{C_{\alpha,\beta,\gamma}}.$

If $\gamma\geq0,$ according to Lemma~\ref{18}, the curve $C_{\gamma}(\varepsilon)\equiv C_{\alpha,\beta,\gamma}(\varepsilon)$ corresponding to a larger value of $\gamma$ is above all the graphs that are related to smaller values of $\gamma.$
Moreover, the majorant of $C_{\gamma}(\varepsilon),\,\gamma\geq0$ given in (\ref{06}), i.e., ${\left(\varepsilon+1\right)^{\frac1\alpha-\frac1\beta}},$ together with
\begin{equation*}
C_{\alpha,\beta,\gamma}=\max_{0\leq
\varepsilon\leq1}\left(\frac{{\left(\varepsilon^{\gamma\beta+1}+1\right)^\frac1\beta}
{\left(1+\varepsilon\right)^\frac1\alpha}}{{\left(\varepsilon^{\gamma\alpha+1}+1\right)^\frac1\alpha}
{\left(1+\varepsilon\right)^\frac1\beta}}\right)\rightarrow\max_{0\leq
\varepsilon\leq1}
{\left(\varepsilon+1\right)^{\frac1\alpha-\frac1\beta}}=2^{\frac1\alpha-\frac1\beta}
\end{equation*}
that holds as $\gamma~\rightarrow~+\infty$ implies $C^{+}_{\alpha,\beta}=2^{\frac1\alpha-\frac1\beta}.$

If $\gamma\in(-\frac{1}{\beta};0),$ by Lemma \ref{18} the curve $C_{\gamma}(\varepsilon)$ that corresponds to a smaller value of $\gamma$ is above all the graphs that are related to larger values of $\gamma.$
Further, according to (\ref{17}), the functions $C_{\gamma}(\varepsilon),\,\gamma\in(-\frac{1}{\beta};0)$ are bounded above by ${\left(\frac2{\varepsilon+1}\right)^{\frac1\beta}}.$ More precisely,

$$
C_{\alpha,\beta,\gamma}(\varepsilon)\rightarrow\max_{0\leq \varepsilon\leq1}
{\left(\frac{\varepsilon+1}{\varepsilon^{\frac{\beta-\alpha}{\beta}}+1}\right)^{\frac1\alpha}}
{\left(\frac2{\varepsilon+1}\right)^{\frac1\beta}}={2^{\frac1\beta}},
$$
as $\gamma\rightarrow{-\frac1{\beta}}$ that is followed by $C^{-}_{\alpha,\beta}=2^{\frac1\beta}.$

Thus, the value
$$C_{\alpha,\beta}=\max\left\{2^{\frac1\alpha-\frac1\beta},2^{\frac1\beta}\right\}$$
depends on the relationship between $\alpha$ and $\beta$ and
\begin{equation}\label{25}
C_{\alpha,\beta}= \left\{\begin{array}{ll}
2^{\frac1\alpha-\frac1\beta}\quad\textnormal{if}\quad 0<\alpha\leq\frac{\beta}{2},\\
2^{\frac1\beta}\quad\textnormal{if}\quad\frac{\beta}{2}<\alpha<\beta\\
\end{array}
\right.
\end{equation}
holds.

\textbf{B.} Let $\alpha<\beta<0,$ $\gamma\in\Gamma_{\alpha,\beta}$.

As in the case A direct estimations of $C_{\alpha,\beta,\gamma}(\varepsilon)$ imply the following: for all $\varepsilon\in[0;1]$
\begin{equation}\label{26}
C_{\alpha,\beta,\gamma}(\varepsilon)\leq{\left(1+\varepsilon\right)^{\frac1\alpha-\frac1\beta}}\quad
\textnormal{and}\quad C_{\alpha,\beta,\gamma}\leq2^{\frac1\alpha-\frac1\beta}\quad(\gamma<0)
\end{equation}
and
\begin{equation}\label{27}
C_{\alpha,\beta,\gamma}(\varepsilon)\leq{\left(\frac{\varepsilon+1}{2}\right)^{\frac1\alpha}}\quad
\textnormal{and}\quad C_{\alpha,\beta,\gamma}\leq2^{-\frac1\alpha}\quad\left(0\leq\gamma<-\frac1\alpha\right).
\end{equation}

It is easy to show that in the case $\alpha<\beta<0$ Lemma \ref{18} remains valid. In order to do this it is enough to repeat exactly the same proof.

Analogously to the case A, the relations
$$
C_{\alpha,\beta,\gamma}\rightarrow\max_{0\leq\varepsilon\leq1}
{\left(\varepsilon+1\right)^{\frac1\alpha-\frac1\beta}}=2^{\frac1\alpha-\frac1\beta}
$$
as $\gamma~\rightarrow~{-\infty},\;\gamma<0$ and
$$
C_{\alpha,\beta,\gamma}
\rightarrow\max_{0\leq \varepsilon\leq1}
{\left(\frac{\varepsilon^{\frac{\alpha-\beta}{\alpha}}+1}{\varepsilon+1}\right)^{\frac1\beta}}
{\left(\frac{\varepsilon+1}2\right)^{\frac1\alpha}}={2^{-\frac1\alpha}},
$$
as $\gamma\rightarrow{-\frac1\alpha},\;\gamma\in[0;-\frac{1}{\alpha})$
together with the majorants (\ref{26}) and (\ref{27}) imply

\begin{equation}\label{28}
C_{\alpha,\beta}= \left\{\begin{array}{ll}
2^{\frac1\alpha-\frac1\beta}\quad\textnormal{if}\quad\alpha\leq2\beta,\\
2^{-\frac1\alpha}\quad\textnormal{if}\quad 2{\beta}<\alpha<\beta.\\
\end{array}
\right.
\end{equation}

\textbf{C.} Let $\alpha<0<\beta,$ $\gamma\in\Gamma_{\alpha,\beta}.$

Directly majorizing the function $C_{\alpha,\beta,\gamma}(\varepsilon)$ as before, for all $\varepsilon\in[0;1]$ we obtain
\begin{equation}\label{29}
C_{\alpha,\beta,\gamma}(\varepsilon)\leq{\left(\frac{\varepsilon+1}{2}\right)^{\frac1\alpha}}
\quad\textnormal{and}\quad C_{\alpha,\beta,\gamma}\leq2^{-\frac1\alpha}\quad\left(0\leq\gamma<-\frac1\alpha\right).
\end{equation}
and
\begin{equation}\label{30}
C_{\alpha,\beta,\gamma}(\varepsilon)\leq{\left(\frac{2}{\varepsilon+1}\right)^{\frac1\beta}}
\quad\textnormal{and}\quad C_{\alpha,\beta,\gamma}\leq2^{\frac1\beta}\quad\left(-\frac1\beta<\gamma<0\right).
\end{equation}

It can be showed that in the case $\alpha<0<\beta$ Lemma \ref{18} remains valid.

Since
$$
C_{\alpha,\beta,\gamma}
\rightarrow\max_{0\leq \varepsilon\leq1}
{\left(\frac{\varepsilon+1}{\varepsilon^{\frac{\beta-\alpha}{\beta}}+1}\right)^{\frac1\alpha}}
{\left(\frac2{\varepsilon+1}\right)^{\frac1\beta}}={2^{\frac1\beta}}
$$
as $\gamma\rightarrow-\frac1\beta,\; -\frac1\beta<\gamma<0$ and
$$
C_{\alpha,\beta,\gamma}
\rightarrow\max_{0\leq \varepsilon\leq1}
{\left(\frac{\varepsilon^{\frac{\alpha-\beta}{\alpha}}+1}{\varepsilon+1}\right)^{\frac1\beta}}
{\left(\frac{\varepsilon+1}2\right)^{\frac1\alpha}}={2^{-\frac1\alpha}},
$$
as $\gamma\rightarrow{-\frac1\alpha},\;0\leq\gamma<-\frac1\alpha,$
taking into account (\ref{29}) and (\ref{30}), derive

\begin{equation}\label{31}
C_{\alpha,\beta}= \left\{\begin{array}{ll}
2^{\frac1\beta}\quad\textnormal{if}\quad 0<\beta\leq{-\alpha},\\
2^{-\frac1\alpha}\quad\textnormal{if}\quad\beta>-\alpha\\
\end{array}
\right.=2^{1/{\min\{|\alpha|,\beta\}}}.
\end{equation}

\begin{center}\small{3. CONCLUSION}\end{center}

Let us combine all the results obtained in the cases A, B, and C.
It follows from (\ref{25}), (\ref{28}), and (\ref{31}) that for different values of $\alpha, \beta,$ and $\gamma$ the strict inequality $C_{\alpha,\beta}<\overline{A}_{\alpha,\beta}$ holds. The left-hand and the right-hand side of this inequality are a lower and an upper estimate for $A_{\alpha,\beta}$ respectively. Because of the strict inequality sign we cannot derive the precise value of $A_{\alpha,\beta}.$ However, $C_{\alpha,\beta}$ is asymptotically equal to $\overline{A}_{\alpha,\beta}.$ Indeed, it follows directly from the analytical expressions of $C_{\alpha,\beta}$ for each case of $\alpha$ and $\beta$ values that

$$C_{\alpha,\beta}\sim\overline{A}_{\alpha,\beta}=2^\frac1\alpha,\,\beta\rightarrow +\infty\quad\textnormal{when}
\quad0<\alpha<\beta;$$
$$C_{\alpha,\beta}\sim\overline{A}_{\alpha,\beta}=2^{-\frac1\beta},\,\alpha\rightarrow -\infty\quad\textnormal{when}
\quad\alpha<\beta<0;$$
$$C_{\alpha,\beta}=2^{-\frac1\alpha}\sim\overline{A}_{\alpha,\beta},\,\beta\rightarrow +\infty$$
or
$$C_{\alpha,\beta}=2^{\frac1\beta}\sim\overline{A}_{\alpha,\beta},\,\alpha\rightarrow -\infty\quad\textnormal{when}\quad
\alpha<0<\beta.$$

Thus, as $\alpha$ and $\beta$ are fixed $A_{\alpha,\beta}\in\left[C_{\alpha,\beta};\overline{A}_{\alpha,\beta}\right],$ but this interval has nonzero measure so that does not determine $A_{\alpha,\beta}$ precisely. In the limiting cases the relations above set equivalency of the lower and the upper estimates of $A_{\alpha,\beta}.$
{\footnotesize

\end{document}